\newtheorem{proposition}{Proposition}
\journal{ }
\begin{document}

\begin{frontmatter}



\title{Approximations of the integral of a class of sinusoidal composite functions}


\author[ecol]{Alberto Costa}
\ead{alberto.costa@sec.ethz.ch}

\address[ecol]{Future Resilient Systems, Singapore-ETH Centre, Singapore}

\begin{abstract}
Two approximations of the integral of a class of sinusoidal composite functions, for which an explicit form does not exist, are derived. Numerical experiments show that the proposed approximations yield an error that does not depend on the width of the integration interval. Using such approximations, definite integrals can be computed in almost real-time.
\end{abstract}

\begin{keyword}
approximation technique \sep Bessel function \sep numerical integration \sep sinusoidal composite functions


\end{keyword}

\end{frontmatter}

\section{Introduction}
It is well known that the explicit representation of some integrals in terms of known functions is not available. In such cases, numerical approximation methodologies as Newton-Cotes rules or the Gauss method can be employed to compute definite integrals \cite{handb,davra,numanal}. 

Among the functions for which the integral is not known, to the best of our knowledge there are the sinusoidal composite functions, e.g., $\cos(\cos(x))$, $\sin(\sin(x))$, which can appear in domains like electromagnetic waves and antennas analysis (see for instance \cite[Cap. 16, pp. 643-644]{applic}).
This has been confirmed by symbolic computation softwares such as Derive\textsuperscript{\texttrademark}6 \cite{derive}, Wolfram Mathematica\textsuperscript{\textregistered}8 \cite{wolfram}, the Symbolic Math Toolbox of MATLAB\textsuperscript{\textregistered}7.01 R14 \cite{matlab}, and Maple\textsuperscript{\texttrademark}15 \cite{maple}.\footnote{A note on the online integrator tool of Wolfram Mathematica\textsuperscript{\textregistered} at \url{https://reference.wolfram.com/language/tutorial/IntegralsThatCanAndCannotBeDone.html} states that for $\int{\sin(\sin(x))\,\mathrm{d} x}$ ``This integral cannot be done in terms of any of the standard mathematical functions built into the Wolfram Language.''. The computation of the integral of $\cos(\cos(x))$ produces a similar warning message.}

In this paper, two functions that approximate the integral of $\cos(\cos(x))$ are proposed. Using a similar methodology, approximations for other sinusoidal composite functions can be obtained. The advantages of having such approximations are: i) definite integrals can be computed almost real-time by evaluating the function at the extreme points of the integration domain, instead of employing more expensive numerical integration algorithms, and ii) such approximations can be used in contexts where the explicit formula is required, for example within non derivative-free optimization problems.

The rest of the paper is organized as follows: in Section \ref{sec:c}, the steps to derive the first approximation of the integral of $\cos(\cos(x))$ are introduced, including an error analysis. An improved approximation is then presented in Section \ref{sec:comcos}, while in Section \ref{sec:num} some numerical results are shown. Finally, Section \ref{sec:concl} summarizes conclusions and future work.

\section{The integration of $\cos(\cos(x))$}
\label{sec:c}
In this section, a functions that approximates the integral of $\cos(\cos(x))$ is derived. In order to do this, the following proposition about the integration of composite functions is introduced. As notation, the integral of $f(x)$ is $F(x)$, while its derivative is $f'(x)$.
 \begin{proposition}
 	Let $f(x)$ be an integrable function, and $g(x)$ a function for which we can compute both the derivative and the second order derivative. The following equations is valid:
 	\begin{equation}
 	\label{eq:formula}
 	\int{f(g(x))\,\mathrm{d} x}=\frac{F(g(x))}{g'(x)}-\displaystyle\int{F(g(x))\left(\frac{1}{g'(x)}\right)'\,\mathrm{d} x}+R,\;R\in\mathbb{R}.
 	\end{equation}
 \end{proposition}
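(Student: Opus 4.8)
The plan is to recognize Equation~\eqref{eq:formula} as a single application of integration by parts, with the chain rule supplying the crucial factorization. First I would use the hypothesis that $F$ is an antiderivative of $f$, so that $F'=f$, and differentiate the composite: by the chain rule $\frac{\mathrm{d}}{\mathrm{d} x}F(g(x))=f(g(x))\,g'(x)$. Assuming $g'(x)\neq 0$ on the interval of interest, I can rearrange this into the identity $f(g(x))=\frac{1}{g'(x)}\,\frac{\mathrm{d}}{\mathrm{d} x}F(g(x))$, which is the form that makes integration by parts applicable.

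Next I would integrate both sides of this identity and treat the right-hand side by parts, choosing $u=\frac{1}{g'(x)}$ and $\mathrm{d} v=\frac{\mathrm{d}}{\mathrm{d} x}[F(g(x))]\,\mathrm{d} x$, so that $v=F(g(x))$ and $\mathrm{d} u=\left(\frac{1}{g'(x)}\right)'\mathrm{d} x$. The standard formula $\int u\,\mathrm{d} v=uv-\int v\,\mathrm{d} u$ then produces the boundary term $\frac{F(g(x))}{g'(x)}$ together with the remaining integral $\int F(g(x))\left(\frac{1}{g'(x)}\right)'\mathrm{d} x$, which are precisely the two terms on the right-hand side of the claimed equation. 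The hypotheses enter exactly where anticipated: integrability of $f$ guarantees that $F$ exists, while the existence of both $g'$ and $g''$ ensures that $g'$ appears in the denominator and that $\left(1/g'\right)'$ is well defined under the remaining integral.

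Finally I would account for the additive constant $R$. Since every integral in the statement is indefinite, each side is determined only up to a real constant, and $R\in\mathbb{R}$ absorbs this ambiguity; no additional work is required here beyond noting that indefinite antiderivatives are not unique. I do not expect a serious obstacle, as the argument is essentially a two-line computation. The only point demanding care is the tacit assumption $g'(x)\neq 0$, which is needed both to perform the division by $g'(x)$ and to keep the boundary term $F(g(x))/g'(x)$ finite; for the intended application $g(x)=\cos x$ this amounts to restricting attention to intervals that avoid the zeros of $\sin x$, a caveat I would state explicitly rather than attempt to remove.
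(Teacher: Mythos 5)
Your proposal is correct. It differs from the paper's proof in direction rather than in substance: the paper disposes of the proposition in one line by differentiating both sides of \eqref{eq:formula} and observing that the product rule applied to $F(g(x))\cdot\frac{1}{g'(x)}$ makes the two extra terms cancel, leaving $f(g(x))$ on each side; you instead \emph{derive} the right-hand side constructively, writing $f(g(x))=\frac{1}{g'(x)}\frac{\mathrm{d}}{\mathrm{d}x}F(g(x))$ via the chain rule and then integrating by parts with $u=\frac{1}{g'(x)}$, $\mathrm{d}v=\frac{\mathrm{d}}{\mathrm{d}x}\bigl[F(g(x))\bigr]\,\mathrm{d}x$. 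Since integration by parts is the product rule read backwards, the two arguments are logically equivalent, but yours has the advantage of explaining where the formula comes from; indeed it turns the paper's subsequent remark about the resemblance between \eqref{eq:formula} and the integration by parts of $\frac{f(x)}{g'(x)}$ in \eqref{eq:part} into the actual mechanism of the proof. You also make explicit the hypothesis $g'(x)\neq 0$, which the paper leaves tacit even though it matters for the application $g(x)=\cos x$, where $\sin x$ vanishes at multiples of $\pi$; flagging that restriction is a genuine improvement rather than a defect.
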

 
 \begin{proof}
 	It is sufficient to derive both the left and right-hand sides.\end{proof}
 An interesting similarity between Equation \eqref{eq:formula} and the integration \emph{by parts} of $\frac{f(x)}{g'(x)}$ can be noticed. As a matter of fact, using the integration by parts the following expression is obtained:
 \begin{equation}
 \label{eq:part}
 \int{\frac{f(x)}{g'(x)}\,\mathrm{d} x}=\frac{F(x)}{g'(x)}-\displaystyle\int{F(x)\left(\frac{1}{g'(x)}\right)'\,\mathrm{d} x}+R,\;R\in\mathbb{R},
 \end{equation}
 which has the same form of \eqref{eq:formula} if $F(x)$ is swapped with $F(g(x))$.  

The problem addressed in this paper can be more formally expressed as:
\begin{equation}
\label{eq:cos}
C(x)=\int{\cos\left(\cos\left(x\right)\right)\,\mathrm{d} x},
\end{equation}
where the aim is to find an approximation of $C(x)$. 
By applying \eqref{eq:formula} we obtain:
\begin{equation}
\label{eq:cosequat}
\int{\cos(\cos(x))\,\mathrm{d} x}=-\frac{\sin(\cos(x))}{\sin(x)}-\displaystyle\int{\sin(\cos(x))\left(\frac{-1}{\sin(x)}\right)'\,\mathrm{d} x}+R,\;R\in\mathbb{R}.
\end{equation}
It is not straightforward to compute the right-hand side integral, hence it should be approximated. As shown later, such approximation introduces a small and bounded error.
The first step is to rewrite the integral as follows (for the sake of clarity, the integration constant $R$ is omitted from now on):
\begin{equation}
\int{\sin(\cos(x))\left(\frac{-1}{\sin(x)}\right)'\,\mathrm{d} x}=\int{\frac{\sin(\cos(x))\cos(x)}{\sin(x)^2}\,\mathrm{d} x}.
\end{equation}
Notice that by replacing $\sin(\cos(x))$ with $\cos(x)$, the problem can be reformulated as the computation of the integral of $\cot(x)^2$, whose solution is known.
It can be easily checked that $\sin(\cos(x))$ and $\cos(x)$ have the same maximum and minimum points, period and roots. The main difference, as shown by Figure \ref{fig:sincos}, is the value of the functions at the maximum and minimum points, but this can be taken into account by approximating $\sin(\cos(x))$ as follows:
\begin{equation}
\label{eq:approsinc}
\sin(\cos(x))\approx\frac{\sin(\cos(0))}{\cos(0)}\cos(x)=\sin(1)\cos(x).
\end{equation}  

\begin{figure}[h!]
	\centering
	\includegraphics[scale=0.8]{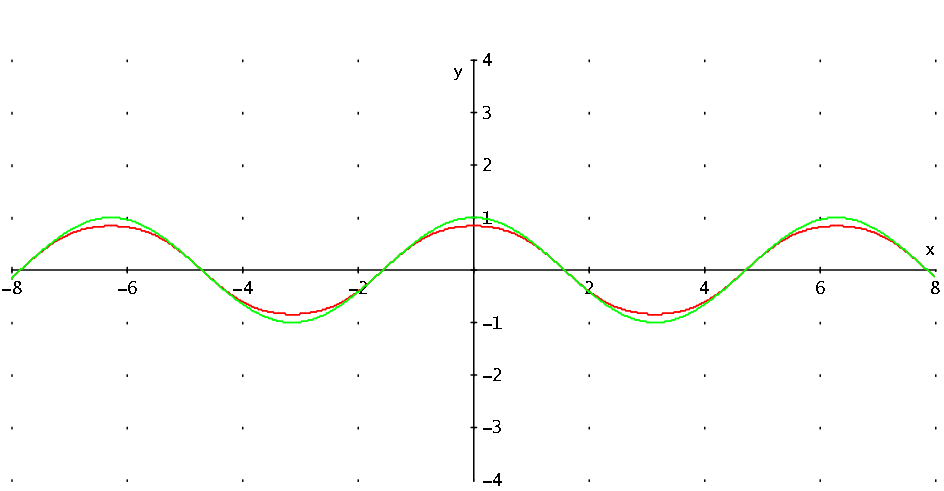}
	\caption{Plots of $\sin(\cos(x))$ (in red - dark) and $\cos(x)$ (in green - light).}
	\label{fig:sincos}
\end{figure}
Hence, we can write:
\begin{equation}
\int{\frac{\sin(\cos(x))\cos(x)}{\sin(x)^2}\,\mathrm{d} x}\approx\int{\sin(1)\cot(x)^2\,\mathrm{d} x}=-\sin(1)\left(\cot(x)+x\right).
\end{equation}
This way, Equation \eqref{eq:cosequat} becomes:
\begin{equation}
\label{eq:firstapp}
\int{\cos(\cos(x))\,\mathrm{d} x}\approx-\frac{\sin(\cos(x))}{\sin(x)}+\sin(1)\left(\cot(x)+x\right)=\tilde{C}(x).
\end{equation}

By comparing the plot of $\tilde{C}(x)$ and the plot of $C(x)$ (obtained with Derive\textsuperscript{\texttrademark}) in Figure \ref{fig:grap}, it is clear that the approximation $\tilde{C}(x)$ is not very accurate.
\begin{figure}[h]
	\centering
	\includegraphics[scale=0.8]{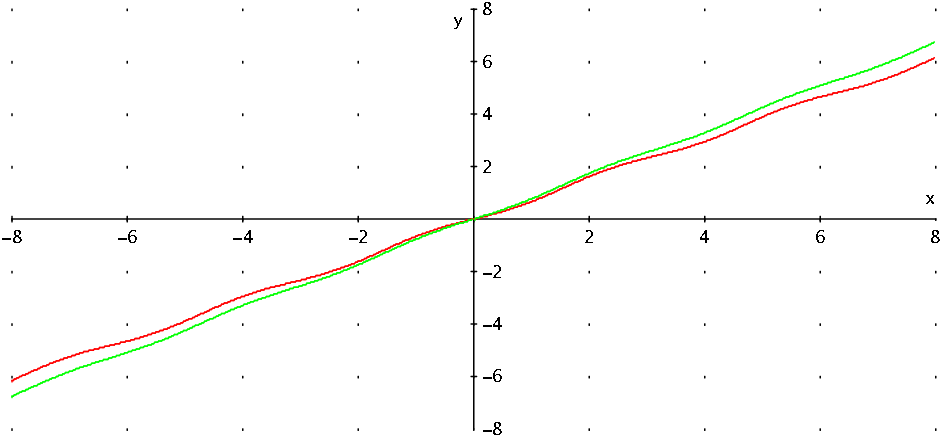}
	\caption{Plot of the integral of $\cos(\cos(x))$ (in red - dark) and $\tilde{C}(x)$ (in green - light).}
	\label{fig:grap}
\end{figure}
Looking at Equation \eqref{eq:firstapp}, it appears that $\tilde{C}(x)$ can be divided into two parts: the first part is a \emph{periodic} function $\tilde{P}(x)$, while the second one is a \emph{linear} function $\tilde{L}(x)$. More precisely, we have:
\begin{eqnarray}
\tilde{P}(x)&=&-\frac{\sin(\cos(x))}{\sin(x)}+\sin(1)\cot(x),\\
\tilde{L}(x)&=&\sin(1)x.
\end{eqnarray}
This subdivision is useful because $\tilde{C}(x)$ introduces some errors in both $\tilde{P}(x)$ and $\tilde{L}(x)$. In order to derive a better approximation, $\tilde{L}(x)$ is analyzed
in Section \ref{sec:l}, while in Section \ref{sec:p} $\tilde{P}(x)$ is considered.

\subsection{Linear function correction}
\label{sec:l}
From the analysis of Figure \ref{fig:grap} it appears that the slope of $\tilde{C}(x)$ is not the same of $C(x)$. In other words, the coefficient $\sin(1)$ in $\tilde{L}(x)$ is wrong.
In order to obtain a better coefficient for the term $x$ in $\tilde{L}(x)$, the following relationship is employed \cite{handb}:
\begin{equation}
\int_0^{\pi}{\cos(\cos(x))\,\mathrm{d}x}=\pi J_0(1),
\end{equation}
where $J_0(x)$ is the Bessel function of the first kind with order 0.
Let $\alpha$ be the correct coefficient of $x$. The value of $\alpha$ can be obtained by solving this equation:
\begin{equation}
\left[\tilde{P}(x)+\alpha x\right]_0^{\pi}=\pi J_0(1).
\end{equation}
Thus, we have:
\begin{equation}
\lim_{x\rightarrow \pi}\tilde{P}(x)-\lim_{x\rightarrow 0}\tilde{P}(x) + \alpha\pi=\alpha\pi={\pi J_0(1)},
\end{equation}
hence $\alpha=J_0(1)$. 
The linear function $\tilde{L}(x)$ can be replaced by $L(x)=J_0(1)x$, and our approximation becomes:
\begin{equation}
\hat{C}(x)=\tilde{P}(x)+L(x)=-\frac{\sin(\cos(x))}{\sin(x)}+\sin(1)\cot(x)+J_0(1)x.
\end{equation}
Numerically, it can be shown that $J_0(1)$ is the real slope of the linear component of $\int{\cos(\cos(x))\,\mathrm{d} x}$. In other words, the function $\int{\cos(\cos(x))\,\mathrm{d} x}-J_0(1)x$ is a periodic function, and the period is the same of $\tilde{P}(x)$.
\begin{proposition}
	\label{prop:slope}
	The function $\int{\cos(\cos(x))\,\mathrm{d} x}-J_0(1)x$ is periodic, with the same period of $\tilde{P}(x)$, that is $\pi$.
\end{proposition}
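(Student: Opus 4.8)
The plan is to set $h(x) = C(x) - J_0(1)x$, where $C$ is the antiderivative in \eqref{eq:cos}, and to show directly that $h(x+\pi) = h(x)$ for every $x$, which establishes $\pi$ as a period. The key observation driving the whole argument is that the \emph{integrand} $\cos(\cos(x))$ already has period $\pi$, even though $\cos(x)$ itself has period $2\pi$. Indeed, since $\cos(x+\pi) = -\cos(x)$ and cosine is even, one gets $\cos(\cos(x+\pi)) = \cos(-\cos(x)) = \cos(\cos(x))$, so the function being integrated is $\pi$-periodic.

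First I would exploit the standard fact that the integral of a $\pi$-periodic function over any interval of length $\pi$ does not depend on the left endpoint. This yields
\begin{equation}
C(x+\pi) - C(x) = \int_x^{x+\pi}\cos(\cos(t))\,\mathrm{d}t = \int_0^{\pi}\cos(\cos(t))\,\mathrm{d}t
\end{equation}
for all $x$. The endpoint-independence itself follows by differentiating $\Phi(x) = \int_x^{x+\pi}\cos(\cos(t))\,\mathrm{d}t$ and using $\pi$-periodicity of the integrand to obtain $\Phi'(x) = \cos(\cos(x+\pi)) - \cos(\cos(x)) = 0$, so $\Phi$ is constant.

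Next I would invoke the relationship $\int_0^{\pi}\cos(\cos(x))\,\mathrm{d}x = \pi J_0(1)$ already used in Section~\ref{sec:l}. Substituting this into the previous display gives $C(x+\pi) - C(x) = \pi J_0(1)$, hence
\begin{equation}
h(x+\pi) - h(x) = \big(C(x+\pi) - C(x)\big) - J_0(1)\pi = \pi J_0(1) - \pi J_0(1) = 0,
\end{equation}
so $h$ is $\pi$-periodic. To match the statement precisely I would then check that $\pi$ is indeed a period of $\tilde{P}(x)$: using $\sin(\cos(x+\pi)) = -\sin(\cos(x))$, $\sin(x+\pi) = -\sin(x)$, and $\cot(x+\pi) = \cot(x)$ shows each term of $\tilde{P}$ is invariant under $x \mapsto x+\pi$.

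The argument is short because essentially all of its content is compressed into the single observation that $\cos(\cos(x))$ collapses to period $\pi$; accordingly, recognizing this reduction is the only real obstacle, and everything afterwards is routine. If one wanted the \emph{fundamental} period rather than merely a period, a small additional check would be required to rule out a strictly smaller period, but the constancy $C(x+\pi)-C(x)=\pi J_0(1)$ together with the explicit form of $\tilde{P}(x)$ already suffices for the claim as stated.
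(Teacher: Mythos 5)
Your proof is correct, and it is worth noting that it is strictly stronger than the argument given in the paper. The paper's proof rests on the identity $\int_{t}^{t+\pi}\cos(\cos(x))\,\mathrm{d} x-J_0(1)\pi=0$ for all $t$, but offers only a graphical hint and a numerical check of that identity. You establish the same identity analytically: the observation that $\cos(\cos(x+\pi))=\cos(-\cos(x))=\cos(\cos(x))$ shows the integrand is $\pi$-periodic, the differentiation of $\Phi(x)=\int_x^{x+\pi}\cos(\cos(t))\,\mathrm{d} t$ shows the integral over any window of length $\pi$ is constant, and the Bessel identity $\int_0^{\pi}\cos(\cos(x))\,\mathrm{d} x=\pi J_0(1)$ from Section \ref{sec:l} pins that constant down. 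From there $h(x+\pi)=h(x)$ follows exactly as in the paper's intended argument, and your explicit verification that $\tilde{P}(x+\pi)=\tilde{P}(x)$ (via $\sin(\cos(x+\pi))=-\sin(\cos(x))$ and $\sin(x+\pi)=-\sin(x)$) supplies the final clause, which the paper also leaves to the reader. In short: same decomposition and same key identity, but where the paper says ``can be checked numerically'' you give the one-line reason it is true, which turns a numerical observation into a proof. Your closing remark about the fundamental period is also apt --- the statement as written only needs $\pi$ to be \emph{a} common period, which your argument delivers.
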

\begin{proof}
	A graphical hint that the linear component of $\int{\cos(\cos(x))\,\mathrm{d} x}$ is $J_0(1)x$ can be obtained by looking at Figure \ref{fig:bessel}. 
	\begin{figure}[h]
		\centering
		\includegraphics[scale=0.8]{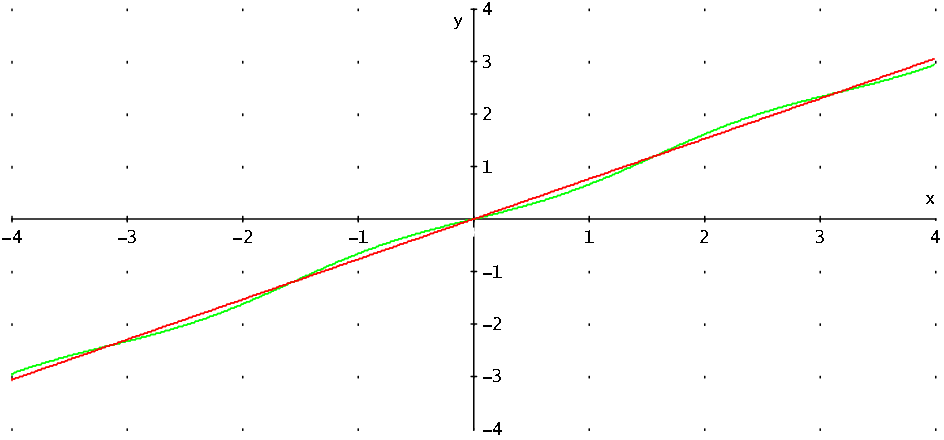}
		\caption{Plot of $J_0(1)x$ (in red - dark) and the integral of $\cos(\cos(x))$ (in green - light).}
		\label{fig:bessel}
	\end{figure}
	However, the following property can be checked numerically:
	\begin{equation}
	\int_{t}^{t+\pi}{\cos(\cos(x))\,\mathrm{d} x}-J_0(1)\pi=0,\,\,\forall t \in \mathbb{R},
	\end{equation}
	meaning that this is a periodic function and its period is $\pi$. Finally, it is easy to prove that the period of $\tilde{P}(x)$ is $\pi$. 
\end{proof} 


\subsection{Periodic function correction}
\label{sec:p}
Approximation $\hat{C}(x)$ presents an error similar to the one shown in Figure \ref{fig:sincos}, where the amplitude of $\hat{C}(x)$ and $C(x)$ is not always the same. 
In order to mitigate this error, a first idea is to multiply $\hat{C}(x)$ by a constant $k$. In Section \ref{sec:comcos} a more accurate method is proposed.
There are two important points to underline before finding this constant $k$. First, $L(x)$ should not be changed, since it represents the correct linear part of the integral, as explained by Proposition \ref{prop:slope}. This means that $k$ will multiply only $\tilde{P}(x)$, giving $P(x)=k\tilde{P}(x)$ which will appear in the final approximation.
The second remark is that if we want to proceed as done in \eqref{eq:approsinc}, $\hat{C}(x)$ and $C(x)$ should be obtained at the points where their difference is maximum, and this is challenging. 
However, an alternative way to obtain the constant $k$ is to employ derivatives:
\begin{equation}
\frac{\mathrm{d}}{\mathrm{d} x}\int{\cos(\cos(x))\,\mathrm{d} x}\approx\frac{\mathrm{d}}{\mathrm{d} x}\left(k\tilde{P}(x)+L(x)\right),
\end{equation}
hence
\begin{equation}
\label{eq:k}
\cos(\cos(x))\approx k\left(\cos(\cos(x))+\frac{\cos(x)\sin(\cos(x))-\sin(1)}{\sin(x)^2}\right)+J_0(1).
\end{equation}
As a consequence, $k$ can be estimated by evaluating Equation \eqref{eq:k} at the points where the absolute value of the difference between $\cos(\cos(x))$ and $\tilde{P}'(x)+L'(x)$ is maximum. Such \emph{maximum difference} points belong to the set $D=\left\{t\frac{\pi}{2},\,t\in\mathbb{Z}\right\}$, and this can also be checked visually by Figure \ref{fig:derivc}.

\begin{figure}[h]
	\centering
	\includegraphics[scale=0.8]{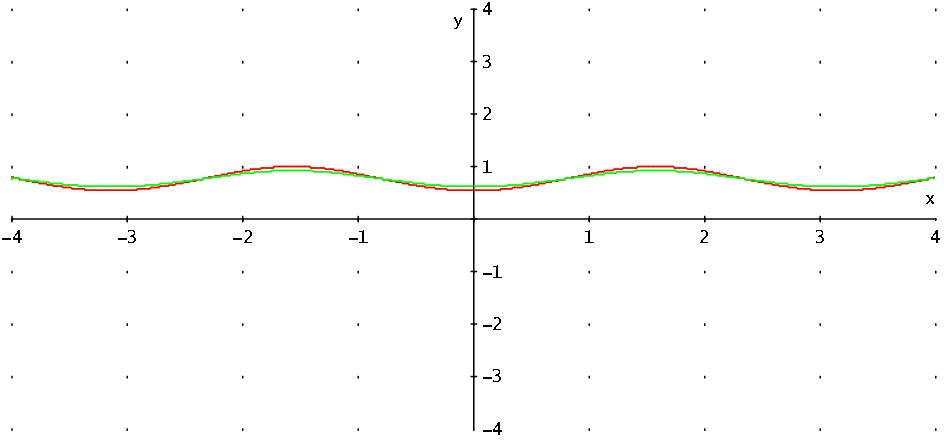}
	\caption{Plot of $\cos(\cos(x))$ (in red - dark) and $\tilde{P}'(x)+L'(x)$ (in green - light).}
	\label{fig:derivc}
\end{figure}
It should be pointed out that the absolute value of the difference between the functions in Figure \ref{fig:derivc} for $x\in D_a=\left\{t\pi,\,t\in\mathbb{Z}\right\}$ is not exactly the same as the difference for the points $x\in D_b=\left\{t\pi+\frac{\pi}{2},\,t\in\mathbb{Z}\right\}$. Hence, two constants $k_a$ and $k_b$ can be derived.

Consider now the estimation of $k_a$. As $0\in D_a$, $k_a$ can be obtained by solving Equation \eqref{eq:k} for $x=0$: 
\begin{equation}
\cos(\cos(0))=k_a\cos(\cos(0))+k_a\lim_{x\rightarrow0}{\frac{\cos(x)\sin(\cos(x))-\sin(1)}{\sin(x)^2}}+J_0(1).
\end{equation}
The resolution gives:
\begin{equation}
\label{eq:kaa}
k_a=\frac{2\left(\cos(1)-J_0(1)\right)}{\cos(1)-\sin(1)}.
\end{equation}

Concerning the estimation of $k_b$, Equation \eqref{eq:k} can be solved for $x=\frac{\pi}{2}\in D_b$:
\begin{equation}
\cos\left(\cos\left(\frac{\pi}{2}\right)\right)=k_b\cos\left(\cos\left(\frac{\pi}{2}\right)\right)+k_b{\frac{\cos\left(\frac{\pi}{2}\right)\sin\left(\cos\left(\frac{\pi}{2}\right)\right)-\sin(1)}{\sin\left(\frac{\pi}{2}\right)^2}}+J_0(1).
\end{equation}
This provides:
\begin{equation}
\label{eq:kbb}
k_b=\frac{1-J_0(1)}{1-\sin(1)}.
\end{equation}

In order to reduce the average error of the approximation, a possible value for the constant $k$ is:
\begin{equation}
k=\frac{k_a+k_b}{2}=\frac{1}{2}\left(\frac{2\left(\cos(1)-J_0(1)\right)}{\cos(1)-\sin(1)}+\frac{1-J_0(1)}{1-\sin(1)}\right).
\end{equation}
Finally, the first complete approximation becomes:
\begin{equation}
\label{finalcos}
C_1(x)=P(x)+L(x)=k\left(-\frac{\sin(\cos(x))}{\sin(x)}+\sin(1)\cot(x)\right)+J_0(1)x.
\end{equation}

\subsection{Error analysis}
\label{sec:erran}
When employing Formula \eqref{finalcos} for computing a definite integral, an error $\epsilon$ is introduced: 
$$\epsilon=\left|\int_l^u{\cos(\cos(x))\,\mathrm{d} x}-C_1(u)+C_1(l)\right|=|C(u)-C(l)-C_1(u)+C_1(l)|.$$

By comparing $\cos(\cos(x))$ and $\tilde{P}'(x)+L'(x)$ it can be noticed that the maximum error occurs when $l=l_h=-\frac{\pi}{4} + h\frac{\pi}{2}$, $u=u_h=\frac{\pi}{4} + h\frac{\pi}{2}$, $\forall h\in\mathbb{Z}$ (see Figure \ref{fig:derivc}).
Since $L(x)$ represents the correct linear component of the integral, the error is in $P(x)=k\tilde{P}(x)$. 
Let the real value (unknown in practice) of the integral of $\cos(\cos(x))$ computed at $x=t$ be $k(t) \tilde{P}(t)+L(t)$. Since $L(x)$ represents the correct linear component of the integral, as shown by Proposition \ref{prop:slope}, we can write:

\begin{equation}
\label{eq:err_def}
\epsilon=|(k(u)-k)\tilde{P}(u)-(k(l)-k)\tilde{P}(l)|.
\end{equation}

It can be easily checked that $\tilde{P}(x)=-\tilde{P}(-x)$, hence $\tilde{P}(u_h)=-\tilde{P}(l_h)$. By taking, for example, $h=0$, we have $l_0=-\frac{\pi}{4}$ and $u_0=\frac{\pi}{4}$, and using the triangle inequality we get:
\begin{equation}
\epsilon\leq\left|\tilde{P}\left(\frac{\pi}{4}\right)(k(u_0)-k+k(l_0)-k)\right|\leq\left|\tilde{P}\left(\frac{\pi}{4}\right)\right|\left(\left|k(u_0)-k\right|+\left|k(l_0)-k)\right|\right).
\end{equation}

At this point, in order to to provide an upper bound for the error, recall that the value of $k(t)$ is between $k_a$ and $k_b$, and $k$ is $\frac{k_a+k_b}{2}$. Since $k_a>k_b$, we have:
\begin{eqnarray}
|k(l_0)-k|\leq\frac{k_a-k_b}{2},\\
|k(u_0)-k|\leq\frac{k_a-k_b}{2}.
\end{eqnarray}
Therefore, the following upper bound is obtained:
\begin{equation}\epsilon\leq\left|\tilde{P}\left(\frac{\pi}{4}\right)\right|(k_a-k_b).
\end{equation}
Numerically, the result is:
\begin{equation}
\epsilon\leq\left(\sqrt{2}\sin\left(\frac{\sqrt{2}}{2}\right)-\sin(1)\right)\left(\frac{2\left(\cos(1)-J_0(1)\right)}{\cos(1)-\sin(1)}-\frac{1-J_0(1)}{1-\sin(1)}\right)\leq 9.6\cdot 10^{-4},
\end{equation}
which gives an idea about the precision of the approximation: in the worst case, the error is less than $10^{-3}$.

Indeed, this is a worst-case. The best-case scenario is when $l=h\frac{\pi}{2}$ and $u=\frac{\pi}{2}+h\frac{\pi}{2}$, $\forall h\in \mathbb{Z}$. In this case, it is easy to check that $\tilde{P}(l)=\tilde{P}(u)=0$, hence from \eqref{eq:err_def} the error is 0. 

\section{Improved approximation}
\label{sec:comcos}
In Section \ref{sec:p} a constant $k$ is employed to obtain a better approximation of $\tilde{P}(x)$. As suggested in the error analysis of Section \ref{sec:erran}, a more accurate result could be achieved by replacing the constant $k$ with a function $k(x)$, but in this case finding the function $k(x)$ may prove to be difficult, since it would be the solution of the following differential equation:
\begin{equation}
\cos(\cos(x))=k'(x)\tilde{P}(x)+k(x)\tilde{P}'(x)+J_0(1).
\end{equation}

An easier way to estimate $k(x)$ comes from the considerations done in Section \ref{sec:p}. The function $\tilde{P}(x)$ can be multiplied by $k(x)$ such that:
\begin{align}
&k(t\pi)=k_a,\,t\in\mathbb{Z}\\
&k\left(t\pi+\frac{\pi}{2}\right)=k_b,\,t\in\mathbb{Z}.
\end{align}
A function that respects such conditions is:
\begin{equation}
k(x)=\frac{k_a-k_b}{2}\cos(2x)+\frac{k_a+k_b}{2}.
\end{equation}
After swapping $k$ with $k(x)$ in Equation \eqref{finalcos}, a new approximation is derived:
\begin{equation}
\label{finalcos2}
C_2(x)=\left(\frac{k_a-k_b}{2}\cos(2x)+\frac{k_a+k_b}{2}\right)\left(-\frac{\sin(\cos(x))}{\sin(x)}+\sin(1)\cot(x)\right)+J_0(1)x.
\end{equation}
In this case, the derivation of a bound for the error is more challenging than that of Section \ref{sec:erran}. However, as expected, numerical results reported in Table \ref{tb:result} show that $C_2(x)$ is more accurate than $C_1(x)$.

\section{Numerical experiments}
\label{sec:num}
In this section the accuracy of the proposed approximations is studied. 
In our tests, a set of six different lower/upper integration bounds is considered, and for each $[l,u]$ 50 random intervals $I\subseteq[l,u]$ are taken. The definite integral for each interval is computed with the two proposed approximations $C_1(x)$ \eqref{finalcos} and $C_2(x)$ \eqref{finalcos2}. The average computational times and errors in the six cases are reported. 
The adaptive Cavalieri-Simpson rule with tolerance of $10^{-10}$ is employed as a reference to compute the error. Concerning the computational time, the adaptive Cavalieri-Simpson rule with tolerance of $10^{-6}$ is used. This adaptive rule is similar to the generalized one, where one divides the interval of integration in $n$ equal subintervals, and apply the Cavalieri-Simpson rule for each one, but in the adaptive case the size of the subintervals is different. Roughly speaking, there will be smaller subintervals where the function to be integrated is more ``irregular'' \cite{numanal,simpsad, numrec}. 
The computational results, presented in Table \ref{tb:result}, have been obtained on a 2.8 GHz Intel Core i7 CPU of a computer with 8 GB RAM running Windows 7 and Matlab\textsuperscript{\textregistered}7.01 R14.

\begin{table}[h]

	\caption{Average computational times and errors obtained with the proposed approximations. For each pair of lower/upper integration bounds, 50 random intervals are taken. Reference time is given by the Cavalieri-Simpson adaptive method with tolerance $10^{-6}$, while errors are computed using as a reference the Cavalieri-Simpson adaptive method with tolerance $10^{-10}$.} 
	
	\begin{center} \footnotesize
		\begin{tabular}{|l|rr|rr|r|}
			\hline
			& \multicolumn{2}{c|}{$C_1(x)$} &
			\multicolumn{2}{c|}{$C_2(x)$}& Ref. \\ 
			Bounds& Time $[s]$&Error&Time $[s]$&Error&Time $[s]$\\ \hline
			$[-1,1]$& $0$& $1.7396\cdot10^{-4}$ &$9.4\cdot10^{-4}$ &$3.3839\cdot10^{-6}$&$8.06\cdot10^{-3}$\\ 
			$[-5,5]$& $0$& $2.2042\cdot10^{-4}$ &$0$ &$2.6032\cdot10^{-6}$&$6.272\cdot10^{-2}$\\ 
			$[-10,10]$& $0$& $1.8882\cdot10^{-4}$ &$1.6\cdot10^{-2}$ &$2.9653\cdot10^{-6}$&$1.5386\cdot10^{-1}$\\ 
			$[-20,20]$& $0$& $1.9543\cdot10^{-4}$ &$0$ &$3.5478\cdot10^{-6}$&$6.2846\cdot10^{-1}$\\ 
			$[-50,50]$& $3.2\cdot10^{-4}$& $2.1629\cdot10^{-4}$ &$3.01\cdot10^{-2}$ &$2.8444\cdot10^{-6}$&$1.5231$\\ 
			$[-100,100]$& $2.2\cdot10^{-4}$& $2.3178\cdot10^{-4}$ &$2\cdot10^{-3}$ &$2.6907\cdot10^{-6}$&$2.7107$\\ \hline
		\end{tabular}
	\end{center}
	\label{tb:result}
\end{table}

\section{Conclusions and future work}
\label{sec:concl}
In this paper, two approximations for the integral of $\cos(\cos(x))$ are proposed. An upper bound for the error of the first approximation is derived. Even though such bound cannot be easily computed in the second case, experimental results show that the second approximation outperforms the first one in terms of error, which is two order of magnitude smaller. 

The proposed approximations present three advantages. First, they are expressed by means of mathematical functions, so they could be used for intermediate simplifications and symbolic computation of more complicated integrals. This could be useful for problems arising in the domains like Physics and in situations where an explicit formula is required. Secondly, the errors do not depend on the width of the interval of integration. Finally, the computation time is almost constant, while the time increases drastically for numerical methods like the adaptive Cavalieri-Simpson, as can be seen in Table \ref{tb:result}.


As future work, a similar methodology can be used to derive approximations for other integrals of sinusoidal composite functions. Some of them are straightforward to derive, e.g., the integral of $A\cos(\cos(mx+q))$, with $A,m,q\in \mathbb{R}$ (it is sufficient to multiply the proposed approximations by $\frac{A}{m}$). 
Others may be more challenging, for example the integrals of $\sin(\sin(x))$, $\cos(\sin(x))$ and similar ones. At the same time, the methodology presented in this paper can provide a first roadmap towards their approximation.

\section*{Acknowledgments}
The research published here was partially conducted at the Future Resilient Systems at the Singapore-ETH Centre (SEC). The SEC was established as a collaboration between ETH Zurich and National Research Foundation (NRF) Singapore (FI 370074011) under the auspices of the NRF's Campus for Research Excellence and Technological Enterprise (CREATE) programme.	Financial support by Grants Digiteo 2009-14D ``RMNCCO'' and Digiteo 2009-55D ``ARM'' is gratefully acknowledged.














\end{document}